\newcommand{\Z}{\mathbb{Z}}
\DeclareFontFamily{OT1}{rsfs}{}
\DeclareFontShape{OT1}{rsfs}{n}{it}{<-> rsfs10}{}
\DeclareMathAlphabet{\mathscr}{OT1}{rsfs}{n}{it}
\DeclareMathOperator{\Pbold}{\mathbf{P}}
\DeclareMathOperator{\Lbold}{\mathbf{L}}
\DeclareMathOperator{\Pnought}{\mathbf{P}_0}
\DeclareMathOperator{\Pone}{\mathbf{P}_1}
\DeclareMathOperator{\Ptwo}{\mathbf{P}_2}
\DeclareMathOperator{\Pthree}{\mathbf{P}_3}
\DeclareMathOperator{\Lnought}{\mathbf{L}_0}
\DeclareMathOperator{\Lone}{\mathbf{L}_1}
\DeclareMathOperator{\Ltwo}{\mathbf{L}_2}
\DeclareMathOperator{\spaceS}{\mathscr{S}}
\DeclareMathOperator{\spaceP}{\mathscr{P}}
\DeclareMathOperator{\spaceL}{\mathscr{L}}
\DeclareMathOperator{\spaceI}{\mathscr{I}}
\newtheorem{prop}{Proposition}[section]
\newtheorem{thm}[prop]{Theorem}
\newtheorem{cor}[prop]{Corollary}
\newtheorem{lem}[prop]{Lemma}
\newtheorem*{esconj}{ Erd\"os-Szemer\'edi conjecture}
\newtheorem*{stthm}{Szemer{\'e}di-Trotter theorem}
\newtheorem*{ffspthm}{Sum-product theorem for prime fields}
\newtheorem*{grd}{Geometric Ruzsa triangle inequality}
\newtheorem*{defn*}{Definition}
\numberwithin{equation}{section}
\title{On growth in an abstract plane }
\author[N.~Gill]{Nick Gill}
\author[H.~A.~Helfgott]{Harald A.~Helfgott}
\author[M.~Rudnev]{Misha Rudnev}
\address{Nick Gill\newline
Department of Mathematics\newline
The Open University\newline
Walton Hall, Milton Keynes, MK7 6AA, United Kingdom}
\email{n.gill@open.ac.uk}
\address{Harald A.~Helfgott\newline
D\'epartement de math\'ematiques et applications\newline
\'Ecole normale sup\'erieure\newline
45 rue d'Ulm, F-75230 Paris,France}
\email{helfgott@dma.ens.fr}
\address{Misha Rudnev\newline
School of Mathematics\newline
University Walk\newline
Bristol, BS8 1TW\newline
United Kingdom}
\email{m.rudnev@bristol.ac.uk}
\begin{document}
\begin{abstract}
There is a parallelism between growth in arithmetic combinatorics and
growth in a geometric context. While, over $\mathbb{R}$ or $\mathbb{C}$,
geometric statements on growth often have geometric proofs, what little 
is known over finite fields rests on arithmetic proofs.
%(Naturally, bounds
%are degraded in the passage from arithmetic to geometry.)
We discuss strategies
for geometric proofs of growth over finite fields, and show that growth
can be defined and proven in an abstract projective plane -- even one with
weak axioms.
\end{abstract}
\maketitle

\section{Introduction}

The ties between arithmetic combinatorics and combinatorial geometry have
been close and fruitful. The underlying correspondence connects problems
involving addition and multiplication, on the one hand, with geometric
problems on incidence, on the other hand. There is more than one way to state
and apply this correspondence; the best way depends on the problem,
and finding it takes some skill.

%, owing to a general paradigm that an arithmetic problem involving addition and/or multiplication can be adequately represented by a certain geometric incidence problem.

\subsection{Real numbers}

In the Euclidean plane, this paradigm is illustrated by Elekes' work on the Erd\"os--Szemer\'edi conjecture \cite{ES}:

\begin{esconj}
For every $\varepsilon \in (0,1)$ there exists $C_\varepsilon>0$ such that, for all finite sets $A\subset \mathbb R$,
\begin{equation*}\label{esc}
|A+A|+|A\cdot A|\geq C_\varepsilon|A|^{2-\varepsilon}.
\end{equation*}
\end{esconj}
Note that we are using the following definitions:
\[\begin{aligned}
A+A &:= \{a_1+a_2:\,a_{1,2}\in A\} , \\
A\cdot A &:= \{a_1\cdot a_2:\,a_{1,2}\in A\},\\
|A| &:= \text{the number of elements of $A$.}\end{aligned}\]

This conjecture should be thought of as a statement concerning the arithmetical {\it growth} of sets in $\mathbb{R}$: it proposes that any set must grow quickly under the operations of multiplication and addition.

Elekes' proved the conjecture for $\varepsilon \in [\frac34, 1)$. 
(The conjecture was known before for $\varepsilon \in [\frac{14}{15}, 1)$;
see \cite{F, Na, ES}.) 
Elekes used the following geometrical result \cite{ST}:

\begin{stthm}
Let $P$ be a finite set of points and $L$ a finite set of lines in the real plane $\mathbb{R}^2$. Then
$$I(P,L) \leq 4|P|^{\frac23}|L|^{\frac23}+4|P|+|L|.$$
\end{stthm}

Note that $I(P,L)$ is the number of incidences between $P$ and $L$:
$$I(P,L):= |\{(p,l)\in P\times L \, \mid \, p\in l\}|.$$
Elekes' proof takes a single paragraph \cite{E}: He considers the number of incidences between the set of points
$$P:=\{(a,b)\, \mid \, a\in A+A, b\in A\cdot A\}$$
and the set of lines
$$L:=\{(x,y) \, \mid \, y=a(x-b)\}$$
and the result falls out immediately.

Subsequent improvements to Elekes' work (the best is due to Solymosi \cite{S}, whose approach also generalises to complex numbers \cite{KR}) also make use of geometric properties of the reals including, in particular, the fact that they are an ordered field.

This connection between the geometry and arithmetic of $\mathbb{R}$ has been pushed further. For instance, the three-dimensional point-line incidence theorem of Guth-Katz \cite{GK} has been applied to establish the following near-optimal sum-product type statement \cite{IRR, RR}: For every $\varepsilon\in (0,1)$ there exists $C_\varepsilon >0$ such that
 $$
 |AA+AA|,\;|(A+A)\cdot (A+A)|\geq C_\varepsilon|A|^{2-\varepsilon}.
 $$

\subsection{Finite fields}
Much less is known about growth over prime fields ${\Z}/{p\Z}$ (and other finite fields). This is due at least in part to the fact
that there is no  ordered geometry to play with over ${\Z}/{p\Z}$.

A major step in studying growth in finite fields was a paper of Bourgain, Katz, and Tao \cite{BKT} in which the following qualitative result was proved:

\begin{ffspthm}
Fix $\delta\in (0,1)$. There exist $\varepsilon \in (0,1)$ and $C>0$ such that for any set $A$ in ${\Z}/{p\Z}$ with $|A|<p^{1-\delta}$ we have \begin{equation*}%\label{e: three}
\max(|A+A|, |A\cdot A|) \geq C |A|^{1+\varepsilon}.
\end{equation*}
\end{ffspthm}

As a corollary of this result, \cite{BKT} derived a variety of incidence-type results including a qualitative `Szemer\'edi-Trotter theorem for prime fields'. These corollaries demonstrate that the connection between geometry and arithmetic remains strong even over prime fields.  However, the result itself was proved using non-geometrical ideas.

Subsequent work yielding quantitative geometric results has followed a
similar path: geometrical results in the projective plane over ${\Z}/{p\Z}$
have been established by reducing them to algebraic sum-product type
relations. The explicit bounds that have been established this way have
been rather weaker than in the Euclidean case \cite{HR, J}. Without going
into detail, let us point out that \cite{J} proves a state-of-the-art exponent $\frac{1}{662}$ in a version of the Szemer\'edi-Trotter theorem, where in the Euclidean case the exponent is $\frac{1}{6}.$

\subsection{Geometric proofs for finite fields}

The absence of properly `geometric proofs' for results over finite fields is noteworthy. It appears sensible to try to
find more idiomatric proofs within geometry (which may yield stronger explicit bounds).

Of course geometric axioms often imply an algebraic structure; for example, the axiom of Pappus implies a field structure. Still, there is a natural sense in which a proof can be said to
happen within geometry, rather than by reduction to an algebraic argument. This
is so even if some ideas from work on groups or fields are taken. We give a simple example of this type in the appendix to this note, showing how just the little Desargues axiom can take over an
argument underlying Ruzsa's distance inequality, one of the key tools in additive combinatorics \cite{R}.

The litmus test here is whether one can give a combinatorial
argument over a projective geometry such that the
argument has no immediate algebraic analogue. This can be forced by having
projective-plane axioms that are too weak for one of the usual algebraic
structures to exist, and yet still proving a meaningful geometric
result in that projective plane. The aim of this paper is to demonstrate
this by giving exactly such a result.

\subsection{Geometric growth}

Our main result concerns a notion of ``growth'' that has already appeared in the literature in relation to the Euclidean plane. For $P$ a set of
points on the plane, define $L(P)$ to be the set of lines
defined by (that is, incident to) some pair of distinct points of $P$.
 As a corollary of the Szemer\'edi-Trotter theorem, Beck proved the following statement  \cite{B}: There exists an absolute constant $c>0$, such that if $P\subset \mathbb R^2$ is a set of points, with no more than $c|P|$ points being collinear,
then $|L(P)|\geq c|P|^2$. We think of the set $P$ as {\it growing} under the operation of `defining lines'.

To state our main result we need to develop this idea a little. (Note that all relevant definitions are given in \S\ref{s: background}. We also recommend \cite{D, HP, Si} for foundations of projective geometry.)

Let $\spaceS=(\spaceP,\spaceL,\spaceI)$ be an abstract projective plane and let $\mathbf{P}$ be a set of points in $\spaceP$. Now define a sequence of sets as follows:
\begin{itemize}
\item $\Pnought=\Pbold$;
\item $\mathbf{L}_i(\mathbf{P}), i=0,1,2,\dots$ is the set of lines incident with at least two points of $\mathbf{P}_{i-1}(\mathbf{P})$ (say $\mathbf{L}_i(\mathbf{P})$ is the set of points {\it defined by} $\mathbf{P}_{i-1}(\mathbf{P})$);
\item $\mathbf{P}_i(\mathbf{P}), i=1,2,\dots$ is the set of points incident with at least two lines of $\mathbf{L}_{i-1}(\mathbf{P})$ (say $\mathbf{P}_i(\mathbf{P})$ is the set of points {\it defined by} $\mathbf{L}_{i-1}(\mathbf{P})$).
\end{itemize}

Where there is no danger of ambiguity we will write $\mathbf{P}_i$ and $\mathbf{L}_i$ rather than $\mathbf{P}_i(\mathbf{P})$ and $\mathbf{L}_i(\mathbf{P})$. Our primary result is the following:

\begin{thm}\label{t: main}
Let $\mathbf{P}$ be a finite set of points in an abstract projective plane. Then one of the following statements holds:
\begin{enumerate}
 \item $|\Pthree|\geq \frac14 |\mathbf{P}|^2$;
\item $\Pone$ is equal to the set of points of a projective subplane, or to the set of points of a projective subplane minus one;
\item $\mathbf{P}$ is a degenerate subplane.
\end{enumerate}
\end{thm}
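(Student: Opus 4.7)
The plan is to dispose of the structural alternatives (2) and (3) first, and then derive the quantitative bound in (1) by a direct incidence count. The first step is the easy observation that, as soon as $\mathbf{P}$ is not contained in a single line, the chain of inclusions
\[
\mathbf{P}=\Pnought\subseteq \Pone\subseteq \Ptwo\subseteq \Pthree
\]
holds: $\mathbf{P}\subseteq \Pone$ because each point of $\mathbf{P}$ lies on at least two lines joining it to the other points, and once this is known the inclusions $\Lone\subseteq \Ltwo\subseteq \Lthree$ propagate the containment to the later stages. A brief case check on the small configurations (lines, near-pencils, triangles) absorbs the remaining exceptions into case (3) as degenerate subplanes, so we may assume $\mathbf{P}\subseteq \Pthree$ and work from there.

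The core of the argument is the following bipartite construction. Fix two distinct points $\pta,\ptb$ of the plane, and let $\busha,\bushb\subseteq \Lthree$ denote the sets of lines of $\Lthree$ through $\pta$ and $\ptb$ respectively. For any pair $(\ell,m)\in \busha\times \bushb$ with $\ell\neq \pta\ptb$ and $m\neq \pta\ptb$, the intersection $\ell\cap m$ is a single point, distinct from both $\pta$ and $\ptb$ (since neither $\ell$ nor $m$ contains both), lying on two lines of $\Lthree$ and hence belonging to $\Pthree$. Distinct pairs give distinct intersections, since the line from $\pta$ to the intersection recovers $\ell$ uniquely and $\ptb$ recovers $m$. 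This yields
\[
|\Pthree|\geq (|\busha|-1)(|\bushb|-1),
\]
so the task reduces to locating two points $\pta,\ptb$ whose $\Lthree$-bushes each have size at least $\tfrac{1}{2}|\mathbf{P}|+1$.

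The pivots are sought inside $\Ptwo$: every pair of $\Ptwo$-points spans a line of $\Lthree$, so for $\pta\in\Ptwo$ the bush $\busha$ has size at least $(|\Ptwo|-1)/M$, where $M$ is the maximum number of $\Ptwo$-points on a single line through $\pta$. Since $|\Ptwo|\geq |\mathbf{P}|$, two well-chosen pivots yield the required bound unless the points of $\Ptwo$ are concentrated on a very small number of lines. The main obstacle is the converse: showing that such concentration forces $\Pone$ into the shape described in case (2), namely a projective subplane or such a subplane with one point removed. This classification step must be carried out entirely within the incidence axioms themselves---no coordinate ring is available---and the idea is to iterate the intersection operation on the lines carrying most of $\Ptwo$, track which points of $\Pone$ arise, and argue that the failure of growth in this iteration is rigid enough to force $\Pone$ to be closed and thus to give the combinatorics of a projective subplane.
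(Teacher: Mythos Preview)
Your high-level picture—two pivots, each carrying a large bush of lines, the cross-product of the bushes mapping injectively into $\Pthree$—is exactly the mechanism the paper uses. But two key ingredients are missing, and without them the argument does not close.

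First, the pivot estimate. Your bound $|\busha|\geq (|\Ptwo|-1)/M$ for $\pta\in\Ptwo$ is far too weak: to get $|\busha|\geq\tfrac{1}{2}|\mathbf{P}|+1$ you would need $M\leq 2$, which is almost never available. The paper's trick is sharper and entirely different in flavour: it looks for pivots in $\Ptwo\setminus\Pone$ rather than in $\Ptwo$. A point $\wp\notin\Pone$ has, by definition, \emph{at most one} line through it containing two or more points of $\mathbf{P}$; combined with the hypothesis that no line carries more than $\tfrac{1}{2}|\mathbf{P}|$ points of $\mathbf{P}$, this forces at least $\tfrac{1}{2}|\mathbf{P}|+1$ distinct lines of $\Ltwo$ through $\wp$. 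So the dichotomy is not ``$\Ptwo$ concentrated on few lines'' versus ``growth'', but rather ``$|\Ptwo\setminus\Pone|\leq 1$'' versus ``two good pivots exist''. (The case where some line carries more than $\tfrac{1}{2}|\mathbf{P}|$ points of $\mathbf{P}$ is handled separately: either $\mathbf{P}$ is degenerate, or there are two points of $\mathbf{P}$ off that line, and those two points already serve as pivots with bushes in $\Lnought$.)

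Second, the structural classification. When $|\Ptwo\setminus\Pone|\leq 1$, your proposed ``iterate the intersection operation and argue rigidity'' is where the real work is, and the paper does it by a completely different route that you have not touched: the De Bruijn--Erd\H os (Fisher) inequality. Since $|\Pone|\leq|\Lone|\leq|\Ptwo|$, the condition $\Ptwo=\Pone$ forces $|\Lone|=|\Pone|$, and a linear space with $b=v$ is either a projective plane or a near-pencil (fan). The case $|\Ptwo|=|\Pone|+1$ is handled by the same Fisher argument one level down. Without this $b=v$ classification lemma, there is no evident way to conclude that $\Pone$ is a projective subplane.
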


Theorem~\ref{t: main} asserts that, provided the starting set isn't close to filling a (possibly degenerate) subplane, then the sequence of sets described above {\it grows} in size. Note that no assumption has been made with regard to the axioms of Desargues, or Pappus: our result holds for planes that cannot be coordinatized by skew-fields.

A question remains: can this line of study be extended to give a geometric
proof of Szemer\'edi-Trotter result over $\mathbb{P}^2(\mathbb{Z}/p\mathbb{Z})$, or over
arbitrary finite projective planes?

\subsection{Acknowledgments}

Nick Gill would like to thank the University of Bristol, to which he has been a frequent visitor during the writing of this paper. Harald Helfgott thanks MSRI
(Berkeley) for its support during a stay there.

\section{Incidence Systems}\label{s: background}

An {\it incidence system} $\spaceS$ is a triple $(\spaceP,\spaceL, \spaceI)$ where $\spaceP$ and $\spaceL$ are sets and $\spaceI\subseteq \spaceP\times \spaceL$. We refer to $\spaceP$ as the set of {\it points},  $\spaceL$ as the set of {\it lines} and $\spaceI$ as the set of {\it incidences} between points and lines. Thus, if $(\wp, \ell)\in \spaceI$, then we say the the point $\wp$ {\it is incident with} the line $\ell$; we will sometimes abuse language by saying things like $\wp$ ``lies on'' $\ell$ or $\ell$ ``contains'' $\wp$, etc.

An incidence system $(\spaceP,\spaceL, \spaceI)$ is called {\it finite} if the sets $\spaceP$ and $\spaceL$ are finite. We define the set of {\it dual incidences} to be
$$\spaceI^D=\{(\ell, \wp)\in \spaceL\times \spaceP \, \mid \, (\wp,\ell)\in \spaceI\}.$$
Then the {\it dual} of $\spaceS$ is the incidence system $(\spaceL, \spaceP, \spaceI^D)$.

\subsection{Projective planes}
A {\it projective plane} $\spaceS$ is an incidence system $(\spaceP, \spaceL, \spaceI)$ satisfying the following axioms:
\begin{itemize}
\item[(P1)] Any two distinct points are incident with exactly one line;
\item[(P2)] Any two distinct lines are incident with exactly one point;
\item[(P3)] There exists a {\it quadrilateral}, i.e. a set of four points, no three of which are incident with the same line.
\end{itemize}
Observe that the dual of a projective plane is also a projective plane.

Property (P2) allows us to abuse language a little more: we say that two lines $\ell_1, \ell_2$ in a projective plane {\it intersect} at a point $\wp$, meaning that $\wp$ is the unique point incident with both $\ell_1$ and $\ell_2$; in this instance we write $\wp=\ell_1\cap \ell_2$.

The standard example of a projective plane is $PG(2,K)$ where $K$ is any skew field. This is constructed as follows: let $V$ be a $3$-dimensional vector space over $K$; define $\spaceP$ to be the set of $1$-dimensional subspaces of $V$, $\spaceL$ to be the set of $2$-dimensional subspaces and define
$$\spaceI =\{(\wp,\ell)\in \spaceP\times \spaceL \, \mid \, \wp\subset \ell\}.$$
Now set $PG(2,K)=(\spaceP, \spaceL, \spaceI)$; it is an easy matter to check that $PG(2,K)$ is a projective plane.

When $K$ is a finite field of order $q$, the projective plane $PG(2,K)$ is known as {\it the Desarguesian plane of order $q$}. This is because these planes are the only finite projective planes which satisfy the configuration of Desargues. (This is a result of Hilbert \cite[p.28]{D}; the configuration of Desargues is defined in the appendix to this paper.)

\subsection{Other incidence systems}
An incidence system which satisfies (P1) is called a {\it linear space}. The linear space is called {\it regular} if every line is incident with the same number of points, $k$. In the literature a regular linear space $(\spaceP, \spaceL, \spaceI)$ for which $\spaceP$ is finite of order $v$ is also known as a $2-(v,k,1)$ {\it design}.

Let $\mathbf{P}$ be a set in a projective plane $(\spaceP, \spaceL, \spaceI)$, and consider the sets
$$\Pnought, \Pone, \Ptwo, \dots \textrm{ and } \Lnought, \Lone, \Ltwo, \dots$$ as defined in the introduction. For $i=0,1,2,\dots$, consider the triple $(\mathbf{P}_i, \mathbf{L}_i, \mathbf{I}_i)$ where $\mathbf{I}_i$ is the restriction of $\spaceI$ to the set $\mathbf{P}_i\times \mathbf{L}_i$; by definition this triple is a linear space. The same is true of the incidence system $(\mathbf{L}_i, \mathbf{P}_{i+1}, \mathbf{J}_i)$ where $\mathbf{J}_i$ is the restriction of $\spaceI^D$ to the set $\mathbf{L}_i\times \mathbf{P}_{i+1}$.

An incidence system which satisfies (P1) and (P2) but not (P3) is known as a {\it degenerate projective plane}. The following result is easy.

\begin{lem}\label{l: degenerate}
Let $\spaceS=(\spaceP, \spaceL, \spaceI)$ be a degenerate projective plane. Then one of the following holds:
\begin{enumerate}
 \item There exists a line $\ell\in\spaceL$ that is incident with every point in $\spaceP$.
\item There exists a line $\ell$ such that all points but one (which we call $\wp$) is incident with $\ell$. Furthermore all other lines are incident with precisely two points, one of which is $\wp$.
\end{enumerate}
\end{lem}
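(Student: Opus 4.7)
The plan is to exploit the failure of (P3) in the form: among any four points, three are collinear. First I would dispose of the easy case where every point of $\spaceP$ lies on a single line, which by (P1) is determined and yields conclusion (1). So suppose instead that there exist three non-collinear points $A, B, C$. Then for any fourth point $D \in \spaceP \setminus \{A,B,C\}$, the negation of (P3) forces three of $\{A,B,C,D\}$ to be collinear; since $\{A,B,C\}$ is not, the collinear triple must include $D$, so $D$ lies on one of the three lines $AB$, $AC$, or $BC$.

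Next I would split on whether one of those three lines contains a fourth point. Suppose without loss of generality that $BC$ contains some $D \notin \{B, C\}$. I claim this forces $AB = \{A,B\}$ and $AC = \{A,C\}$. If not, say $E \in AB \setminus \{A, B\}$, then I would check that $\{A, C, D, E\}$ are four distinct points no three of which are collinear, contradicting the failure of (P3). The key verifications use (P1) and (P2) to rule out coincidences: $D \ne A$ because $A \notin BC$; $E \ne C$ because $C \notin AB$; $E \ne D$ because $AB \cap BC = \{B\}$ by (P2) and $E \ne B$; and collinearity of any triple is refuted by observing that it would force two of the distinct lines $AB, AC, BC$ to share two points, contradicting (P1). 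Granted the claim, every point of $\spaceP$ lies in $\{A\} \cup BC$, with $A \notin BC$.

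Setting $\ell := BC$ and $\wp := A$, I then need to show that every other line $m \ne \ell$ consists of exactly two points, one of which is $\wp$. By (P2), $m$ meets $\ell$ in a unique point $P$. Any further point of $m$ lies in $\{A\} \cup \ell$; it cannot lie on $\ell$, for otherwise $m$ would share two points with $\ell$ and hence equal $\ell$ by (P1). So every additional point of $m$ equals $A$, giving $m = \{A, P\}$. This covers the generic case; the residual subcase in which each of $AB, AC, BC$ has only its two defining points is handled by the same choice $\ell = BC$, $\wp = A$, since then $\spaceP = \{A,B,C\}$ and the only other lines, $AB$ and $AC$, are already two-point lines through $A$.

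The main obstacle is purely bookkeeping: carefully verifying that the candidate quadrilateral $\{A,C,D,E\}$ produced in the contradiction step really is a quadrilateral, which requires invoking (P1) and (P2) repeatedly to prevent any of the distinct lines $AB$, $AC$, $BC$ from colluding. A minor technical point is the tacit assumption that every line carries at least two points; this follows from (P2) as soon as there are at least two lines, which is the only case in which the final clause of conclusion (2) has nontrivial content.
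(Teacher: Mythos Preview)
The paper does not actually prove this lemma; it simply declares the result ``easy'' and omits the argument entirely. Your proof is correct and fills that gap cleanly: the case split on whether all points are collinear, followed by the analysis of a non-collinear triple $A,B,C$ and the contradiction via the candidate quadrilateral $\{A,C,D,E\}$, is the standard route, and you have checked the distinctness and non-collinearity conditions with the appropriate appeals to (P1) and (P2).

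One small point could be sharpened. Your final remark that every line carries at least two points ``follows from (P2) as soon as there are at least two lines'' is a little loose as stated. The precise argument you want is: if some $m\ne\ell$ contained only its intersection point $P$ with $\ell$, then pick any $Q\in\ell\setminus\{P\}$ (which exists since $\ell\supseteq\{B,C\}$); the line $\overline{AQ}$ is distinct from $m$ and disjoint from it, contradicting (P2). This forces $A\in m$, giving $m=\{A,P\}$ as required.
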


We will refer to a degenerate projective plane of the second type as a {\it fan}.

\section{Growth in the projective plane}

In this section we prove Theorem~\ref{t: main}.

\subsection{Preparatory lemmata}

Consider a set of points $\mathbf{P}$ in a projective plane $\spaceS$; we write $\mathbf{L}$ for $\Lnought({\mathbf{P}})$, the set of lines defined by $\Pbold$. Below we give a number of elementary results concerning the sets $\Pbold$ and $\mathbf{L}$. Recall that the dual of a projective plane is also a projective plane, hence the results we give also apply to the sets $\mathbf{L}$ and $\Pone$, say.

We note that, provided there does not exist a line incident with all elements of $\Pbold$, we have $\Pbold\subseteq \Pone \subseteq \Ptwo \subseteq \cdots$ and (by duality) $\mathbf{L}\subseteq \Lone \subseteq \Ltwo \subseteq \cdots$. In particular, the two sequences of sets {\it grow} in size; indeed, as we shall see in Corollary~\ref{c: frodo}, growth occurs at every step of the construction.

\begin{lem}\label{lem:parnas}
Let $\ell_1$ and $\ell_2$ be two distinct lines in the plane.
Suppose there are $m_1$ points of $\mathbf{P}$
on $\ell_1$ and $m_2$ points of $\mathbf{P}$ on $\ell_2$. Then there
are $\geq (m_1-1) (m_2-1)$ lines in $\mathbf{L}$.

Moreover: if $\ell_1\cap\ell_2$ does not  lie in $\mathbf{P}$, then $|\mathbf{L}|\geq m_1 m_2$; otherwise,
$|\mathbf{L}|\geq (m_1-1) (m_2-1) + a$, where $a$ is the number of lines of $\mathbf{L}$ going through $P$.
\end{lem}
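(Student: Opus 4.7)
The plan is to exhibit an explicit collection of distinct lines in $\mathbf{L}$, obtained by joining points of $\mathbf{P}$ on $\ell_1$ to points of $\mathbf{P}$ on $\ell_2$, and then, when needed, to augment this collection by the lines of $\mathbf{L}$ passing through $P := \ell_1\cap \ell_2$.

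First I would set up notation: let $\mathbf{P}_i^* = (\mathbf{P}\cap \ell_i)\setminus\{P\}$ for $i=1,2$, and write $m_i' = |\mathbf{P}_i^*|$, so that $m_i' = m_i$ if $P\notin \mathbf{P}$ and $m_i' = m_i - 1$ otherwise. For every pair $(p_1,p_2)\in \mathbf{P}_1^*\times \mathbf{P}_2^*$, axiom (P1) gives a unique line $\linel(p_1,p_2)$ through $p_1$ and $p_2$; since $p_1\neq p_2$ (they lie on different lines, both distinct from $P$) this line lies in $\mathbf{L}$.

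The key step is to check that the map $(p_1,p_2)\mapsto \linel(p_1,p_2)$ is injective, yielding $m_1'm_2'$ distinct lines in $\mathbf{L}$. Suppose $\linel(p_1,p_2) = \linel(p_1',p_2')$. If $p_1\neq p_1'$, then this common line contains two distinct points of $\ell_1$, so by axiom (P2) it equals $\ell_1$; but then $p_2\in \ell_1\cap \ell_2 = \{P\}$, contradicting $p_2\in \mathbf{P}_2^*$. Hence $p_1=p_1'$, and by a symmetric argument $p_2=p_2'$. This gives the ``moreover'' clause in the case $P\notin\mathbf{P}$, namely $|\mathbf{L}|\geq m_1m_2$.

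When $P\in \mathbf{P}$ we have $m_1'm_2' = (m_1-1)(m_2-1)$ distinct lines so far, none of which pass through $P$: indeed such a line $\linel(p_1,p_2)$ through $P$ would, by (P1) applied to $\{P,p_1\}$, coincide with $\ell_1$, again forcing $p_2\in \ell_1\cap\ell_2$, a contradiction. Therefore the $a$ lines of $\mathbf{L}$ through $P$ are disjoint from the previous collection, and adding them yields $|\mathbf{L}|\geq (m_1-1)(m_2-1) + a$. The first assertion $|\mathbf{L}|\geq (m_1-1)(m_2-1)$ follows in either case, since $m_1m_2\geq (m_1-1)(m_2-1)$ and $a\geq 0$.

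There is no real obstacle here; the only thing to be careful about is the bookkeeping when $P$ does or does not belong to $\mathbf{P}$, and the repeated use of (P1) and (P2) to rule out coincidences. The argument is purely incidence-theoretic and uses nothing beyond the projective plane axioms, which is in keeping with the paper's stated aim of arguing inside the geometry.
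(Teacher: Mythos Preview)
Your argument is correct and follows exactly the paper's approach: join each point of $\mathbf{P}$ on $\ell_1\setminus\{P\}$ to each point of $\mathbf{P}$ on $\ell_2\setminus\{P\}$, use the incidence axioms to see that these lines are pairwise distinct and avoid $P$, and then add back the lines through $P$ when relevant. The paper's proof is terser (it leaves the ``moreover'' clauses essentially to the reader), but your more explicit bookkeeping and the injectivity check for the map $(p_1,p_2)\mapsto \linel(p_1,p_2)$ are exactly the content behind its one-line justification.
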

\begin{proof}
Write $\wp$ for $\ell_1\cap\ell_2$.
Since two distinct lines cannot intersect at more than one point,
every pair of points $(\wp_1,\wp_2)$, $\wp_j\in \ell_j$, $\wp_j\ne \wp$,
determines a different line.
There are $\geq (m_1-1) (m_2 - 1)$ lines thus determined, and they
all are in $\mathbf{L}$, by the definition of $\mathbf{L}$. None of them
goes through $\wp$.
\end{proof}

\begin{lem}\label{lem:frodo}
Let $\mathbf{P}$ be a set of points not all on a line. Let
$\mathbf{L}$ be, as usual, the set of lines they define. Then
$|\mathbf{L}|\geq |\mathbf{P}|$.
\end{lem}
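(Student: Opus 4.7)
The statement is the classical de~Bruijn--Erd\"os theorem on the minimum number of lines spanned by a non-collinear set, and I would prove it by the short linear-algebra argument (due to Motzkin) that does not even need Lemma~\ref{lem:parnas}. Let $n=|\mathbf{P}|$, $L=|\mathbf{L}|$, and let $M\in\{0,1\}^{n\times L}$ be the point--line incidence matrix, with $M_{p\ell}=1$ exactly when $p$ is incident to $\ell$. The plan is to pass from the combinatorial problem to the real-linear-algebraic one of bounding $\mathrm{rank}(M)$ from below, and to do this by showing that the Gram matrix $M M^{T}$ is positive definite, so that
\[
L \;\geq\; \mathrm{rank}(M) \;\geq\; \mathrm{rank}(M M^{T}) \;=\; n.
\]

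To compute $M M^{T}$, note that any two distinct points of $\mathbf{P}$ lie on a unique common line (axiom (P1)), and that this line belongs to $\mathbf{L}$ by the definition of $\mathbf{L}$. Hence every off-diagonal entry of $M M^{T}$ equals $1$, while the diagonal entry at $(p,p)$ is $r_p$, the number of lines of $\mathbf{L}$ through $p$. Writing $J$ for the $n\times n$ all-ones matrix and $D=\mathrm{diag}(r_p-1)$, I obtain the decomposition $M M^{T} = J + D$.

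The one small fact to check is that $r_p \geq 2$ for every $p\in\mathbf{P}$: if $r_p\leq 1$, every other point of $\mathbf{P}$ would lie on the unique line of $\mathbf{L}$ through $p$, contradicting the hypothesis that $\mathbf{P}$ is not collinear. Consequently $D$ is a diagonal matrix with strictly positive entries, $J$ is positive semi-definite, and so $M M^{T} = J + D$ is positive definite and of rank $n$. The only step that might look like work is this positive-definiteness, but it is a one-line consequence of the matrix determinant lemma, which even yields the explicit value $\det(J+D) = \bigl(\prod_{p\in\mathbf{P}}(r_p-1)\bigr)\bigl(1+\sum_{p\in\mathbf{P}}(r_p-1)^{-1}\bigr) > 0$. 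I do not anticipate any real obstacle. As a bonus, since the argument uses only axiom (P1), it establishes $|\mathbf{L}|\geq|\mathbf{P}|$ in every finite linear space in which $\mathbf{P}$ is not collinear, which is strictly more than the lemma claims.
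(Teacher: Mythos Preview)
Your argument is correct. The decomposition $MM^{T}=J+D$ with $D=\mathrm{diag}(r_p-1)$ is right, the verification that $r_p\geq 2$ is clean, and the conclusion $L\geq \mathrm{rank}(M)=\mathrm{rank}(MM^{T})=n$ follows. One small expository wrinkle: positive-definiteness of $J+D$ is already established by ``$D$ positive definite plus $J$ positive semi-definite''; the matrix determinant lemma only gives $\det(J+D)>0$, which by itself would not suffice, so that sentence is a bit misleading even though the proof does not actually rely on it.

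As for comparison with the paper: the paper does not prove the lemma at all---it simply identifies it as the de~Bruijn--Erd\H{o}s inequality and cites \cite{BE}. The original proof in \cite{BE} is purely combinatorial (an extremal argument on a point of minimal degree), whereas you give the linear-algebra proof usually credited to Bose in the design-theory literature (Motzkin's name is attached to a different, combinatorial proof). Your route has the advantage of being self-contained and of making transparent that only axiom (P1) is used, so it applies to arbitrary finite linear spaces---though note the paper is already aware of this, since it invokes the result for the dual pair $(\mathbf{L}_i,\mathbf{P}_{i+1})$ in Corollary~\ref{c: frodo}.
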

This is the well-known {\it Fisher's inequality} for linear spaces. See \cite{BE} for proof. \qed

\medskip

Lemma~\ref{lem:frodo} implies that the sets we defined in the introduction grow at every step.

\begin{cor}\label{c: frodo}
Let $\Pbold$ be a set of points not all on a line. Then
$$|\Pnought|\leq |\Lnought|\leq |\Pone|\leq |\Lone| \leq |\Ptwo| \leq |\Ltwo|\leq \cdots.$$
\end{cor}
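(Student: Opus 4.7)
The plan is to derive the chain by iterated application of Fisher's inequality (Lemma~\ref{lem:frodo}), alternating between $\spaceS$ and its dual projective plane. The chain of inclusions $\Pbold\subseteq\Pone\subseteq\Ptwo\subseteq\cdots$ and $\Lnought\subseteq\Lone\subseteq\Ltwo\subseteq\cdots$ noted in the preamble (valid because $\Pbold$ is not collinear) will propagate the hypothesis of Fisher's inequality through every step of the induction.

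The base inequality $|\Pnought|\le|\Lnought|$ is Lemma~\ref{lem:frodo} applied directly to $\Pbold$. For the dual step $|\Lnought|\le|\Pone|$, I would apply Lemma~\ref{lem:frodo} in the dual plane, where $\Lnought$ plays the role of a set of points and the ``lines they define'' in the dual are precisely the points of $\spaceP$ incident with at least two lines of $\Lnought$, i.e., the set $\Pone$. The hypothesis of Fisher's inequality in the dual requires that no point of $\spaceP$ be incident with every line of $\Lnought$. I would verify this by contradiction: were some point to lie on every line of $\Lnought$, then picking any three non-collinear $\wp_1,\wp_2,\wp_3\in\Pbold$ (which exist by hypothesis), the three lines they determine would all be concurrent at that point. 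But the lines through $\{\wp_1,\wp_2\}$ and $\{\wp_1,\wp_3\}$ meet only at $\wp_1$ by axiom (P2), and a symmetric argument using the line through $\{\wp_2,\wp_3\}$ forces the contradiction.

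For the inductive step, the inclusion $\Pbold\subseteq\mathbf{P}_i$ ensures $\mathbf{P}_i$ always contains three non-collinear points, so Fisher's inequality yields $|\mathbf{P}_i|\le|\mathbf{L}_i|$; dually, the inclusion $\Lnought\subseteq\mathbf{L}_i$, together with the non-concurrency of $\Lnought$ established above, ensures $\mathbf{L}_i$ is not concurrent, so Fisher's inequality applied in the dual yields $|\mathbf{L}_i|\le|\mathbf{P}_{i+1}|$. Concatenating these inequalities over all $i$ produces the full chain.

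I do not anticipate any serious obstacle: the argument reduces to verifying the non-collinearity and non-concurrency hypotheses at each stage, which is immediate from the inclusion chains stated in the preamble. The only genuinely new observation needed is the three-non-collinear-points argument that seeds the dual direction of the induction, and this is a one-line consequence of axiom (P2).
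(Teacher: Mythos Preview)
Your proposal is correct and follows essentially the same route as the paper: iterated application of Fisher's inequality (Lemma~\ref{lem:frodo}), alternately in $\spaceS$ and in its dual, with the inclusion chains from the preamble feeding the non-collinearity/non-concurrency hypothesis forward. The paper packages the same idea slightly differently, invoking the observation from \S\ref{s: background} that $(\mathbf{P}_i,\mathbf{L}_i)$ and $(\mathbf{L}_i,\mathbf{P}_{i+1})$ are linear spaces and then appealing to Lemma~\ref{lem:frodo} without spelling out the hypothesis check; your explicit verification that $\Lnought$ is non-concurrent is exactly the detail the paper suppresses.
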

\begin{proof}
We have observed already that, for $i=0, 1, 2\dots$, both the pair $(\Pbold_i, \mathbf{L}_i)$ and the pair $(\mathbf{L}_i, \Pbold_{i+1})$ are linear spaces. Now the result follows from Lemma~\ref{lem:frodo}.
\end{proof}

\subsection{Some results on finite linear spaces}

In this section we consider a finite linear space $\spaceS=(\spaceP, \spaceL, \spaceI)$. We write $v=|\spaceP|$ and $b=|\spaceL|$. We also define $k$ to be the average number of points incident with a line, and $r$ to be the average number of lines incident with a point.

\begin{lem}
If $b=v$ then $\spaceS$ is either a regular linear space or a fan.
\end{lem}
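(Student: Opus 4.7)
The plan is to emulate the classical analysis behind the equality case of Fisher's inequality (Lemma~\ref{lem:frodo}). For each point $p$ let $r_p$ be the number of lines through $p$, and for each line $\ell$ let $k_\ell$ be the number of points on $\ell$; write $N = |\spaceI|$. Then $\sum_p r_p = \sum_\ell k_\ell = N$; axiom (P1) gives the identity $\sum_\ell k_\ell(k_\ell - 1) = v(v-1)$; and the key elementary estimate is that $r_p \geq k_\ell$ whenever $p \notin \ell$, since the lines $pq$ with $q\in\ell$ are all distinct.

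I would split on the maximum line size $k^* := \max_\ell k_\ell$. If $k^* = v$, then (P1) immediately forces $b=1$, whence $v=1$ and the statement is vacuous. If $k^* = v-1$, let $\ell^*$ realise the maximum and let $p^*$ be the unique point off $\ell^*$; by (P1) any line through $p^*$ contains at most one further point of $\ell^*$ (else it would coincide with $\ell^*$), so the $v-1$ lines $\{p^*, q\}$ for $q\in\ell^*$ each have exactly two points. Together with $\ell^*$ these are $v$ distinct lines, so (as $b=v$) they exhaust the set of lines and $\spaceS$ is a fan.

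In the remaining case $k^* \leq v-2$, the plan is to show that $\spaceS$ satisfies (P2); together with (P1) and the already-excluded degenerate cases of Lemma~\ref{l: degenerate}, this makes $\spaceS$ a projective plane, and every projective plane is a regular linear space. To establish (P2) I would double-count $r_p - k_\ell \geq 0$ over non-incident pairs; using $b=v$ this simplifies to $\sum_\ell k_\ell^2 \geq \sum_p r_p^2$. Combining this with the identity $\sum_\ell k_\ell^2 = v(v-1) + N$ and with the bound $\sum_p r_p^2 \leq b(b-1) + N$ --- the latter saturated exactly when every pair of distinct lines meets, since $\sum_p r_p(r_p-1)$ counts ordered pairs of distinct lines sharing a point --- sandwiches both $\sum_p r_p^2$ and $\sum_\ell k_\ell^2$ between the same value $v(v-1)+N$, and the target is to collapse these sandwiches to equalities.

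The main obstacle is that the two bounds on $\sum_p r_p^2$ point in the same direction and so the chain does not pinch automatically. I expect to close the gap by a contradiction argument: if two disjoint lines $\ell_1, \ell_2$ were to exist, then for $p \in \ell_i$ one would have the strictly improved bound $r_p \geq 1 + k_{\ell_{3-i}}$ (the $+1$ accounting for $\ell_i$ itself, which is distinct from the $k_{\ell_{3-i}}$ lines joining $p$ to points of $\ell_{3-i}$), with analogous bookkeeping for $p\notin\ell_1\cup\ell_2$. Aggregated, these improvements should clash with $\sum_p r_p = N$ and with $N \leq v k^*$ under the hypothesis $k^*\leq v-2$, ruling out the existence of a disjoint pair of lines and hence forcing (P2).
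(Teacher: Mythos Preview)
Your case split on $k^*$ and the handling of $k^*\in\{v-1,v\}$ are fine. The gap is in the main case $k^*\le v-2$. As you yourself note, the chain
\[
\sum_\ell k_\ell^2 \;=\; v(v-1)+N \;\ge\; \sum_p r_p^2,
\qquad
\sum_p r_p^2 \;\le\; b(b-1)+N \;=\; v(v-1)+N
\]
gives two upper bounds on $\sum_p r_p^2$ and no lower bound, so nothing is pinched. Your proposed repair does not fix this: if $\ell_1,\ell_2$ are disjoint, the strict inequalities $r_p\ge 1+k_{\ell_{3-i}}$ for $p\in\ell_i$ only sharpen the non-incident double count, yielding $\sum_\ell k_\ell^2>\sum_p r_p^2$ strictly, which is perfectly compatible with $\sum_p r_p(r_p-1)<b(b-1)$ (the very inequality you are trying to exclude). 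Likewise, summing these $r_p$-bounds and comparing with $N\le v k^*$ gives at best $k^*(v-k^*+1)\le N\le v k^*$, i.e.\ $k^*\ge 1$; the hypothesis $k^*\le v-2$ never enters with any force. So the contradiction you ``expect'' is not there. (You are also aiming past the target: the lemma asks only for \emph{regular}, not for (P2); that upgrade is Lemma~\ref{l: poop}.)

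The paper's argument runs in the opposite direction and does pinch. Assuming non-regularity, one shows the average $r$ satisfies $r<c:=k^*$, hence the minimum degree $d$ satisfies $d<c$. Letting $c_1$ be the size of the second-most-populous line, one has the \emph{upper} bound $v\le c+(d-1)(c_1-1)$ (cover all points by the $d$ lines through a minimum-degree point, one of which may be the $c$-line) and, via Lemma~\ref{lem:parnas}, the \emph{lower} bound $b\ge (c-1)(c_1-1)+d$. With $b=v$ these combine to $(c-d)(c_1-2)\le 0$, forcing $c_1=2$; a short count then gives $v=c+1$, i.e.\ a fan. The crucial difference from your attempt is that here one has bounds on $v$ and $b$ pointing in \emph{opposite} directions, so $b=v$ genuinely squeezes.
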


\begin{proof}
Let $c$ be the maximum number of points on a line of $\spaceS$. Assume that there is some line in $\spaceS$ which is incident with $<c$ points; we wish to conclude that $\spaceS$ is a fan.

Define a flag to be a pair $(\wp,\ell)$ where $\wp$ is a point and $\ell$ is a line and $\wp$ is incident with $\ell$. We can count flags in two different ways. The number of flags is equal to
$$f:= bk = vr.$$
Then, by assumption, $f<bc = vc$. This means that $r<c$ and so there is a point, $\alpha$, which is incident with $r_\alpha\leq c-1$ lines. Let $d$ be the minimum number of lines that go through any particular point. Observe that $d\leq r < c$.

Now let $c_1\leq c$ be equal to the number of points on the second-most populous line of $\spaceS$. The total number of points equals $v\leq c+(d-1)(c_1-1)$. On the other hand the number of lines connecting points on the two most-populous lines is less than $v$ and more than $(c-1)(c_1-1) + d$. This means that
\begin{eqnarray*}
&&(c-1)(c_1-1) + d\leq c+(d-1)(c_1-1 \\
&\implies& (c-d)c_1\leq 2(c-d).
\end{eqnarray*}
Since $c>d$ we conclude that $c_1=2$ (note that, in particular, this means that $c\geq 3$ as otherwise all lines contain two points). Thus the total number of lines in $\spaceS$ is $v=1+c(v-c)$ which implies that $v=c+1$; then $\spaceS$ is a fan.
\end{proof}

\begin{lem}\label{l: poop}
If $b=v$ then $\spaceS$ is a projective plane or a fan.
\end{lem}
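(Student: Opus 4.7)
The plan is to invoke the previous lemma to reduce to the case where $\spaceS$ is a regular linear space, and then to verify axioms (P2) and (P3) by counting; a small arithmetic accident will force the case $k = 2$ into the ``fan'' alternative of the conclusion rather than the ``projective plane'' one. So first I would apply the preceding lemma to assume $\spaceS$ is regular with every line containing $k$ points (if $\spaceS$ is already a fan, we are done). Fixing any point $p$ and partitioning the remaining $v - 1$ points by the unique line through $p$ on which each lies shows that every point lies on exactly $r = (v-1)/(k-1)$ lines. Double-counting flags then gives $bk = vr$, and substituting $b = v$ yields the two identities $v = k^2 - k + 1$ and $r = k$, which will be the only arithmetic inputs.

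To verify (P2), I would argue by contradiction: if two lines $\ell_1, \ell_2$ did not meet, then for any $p \in \ell_1$ the $k$ lines joining $p$ to the points of $\ell_2$ are pairwise distinct (they hit distinct points of $\ell_2$) and all distinct from $\ell_1$ (they contain points not on $\ell_1$); together with $\ell_1$ this would force $k + 1$ lines through $p$, contradicting $r = k$.

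To verify (P3), I would pick a line $\ell$, two points $q_1, q_2 \in \ell$, and some $p \notin \ell$ (available since $v > k$). By (P2) the three lines $\ell, \overline{p q_1}, \overline{p q_2}$ meet pairwise at the three distinct points $q_1, q_2, p$, so inclusion--exclusion gives that their union contains exactly $3k - 3$ points. Since $v - (3k - 3) = (k - 2)^2$, a fourth point $p'$ off all three lines exists precisely when $k \geq 3$, producing the quadrilateral $\{p, p', q_1, q_2\}$. The edge case $k = 2$ forces $v = b = 3$, so $\spaceS$ is a triangle; taking any of its three lines as $\ell$ and the remaining point as $\wp$ exhibits it as a fan. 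The main obstacle here is not any single calculation but recognising that (P3) genuinely fails at $k = 2$, so that the ``fan'' clause of the conclusion is essential to absorb this case; everything else is routine design-theoretic counting.
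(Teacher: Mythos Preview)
Your proof is correct and follows the same strategy as the paper's: apply the previous lemma to reduce to the regular case, derive the parameters $r=k$ and $v=k^2-k+1$, and conclude that $\spaceS$ is a projective plane. The only difference is that the paper outsources this last implication to \cite[p.~138]{D}, whereas you verify (P2) and (P3) by hand and explicitly note that the edge case $k=2$ yields the triangle, which is a fan; in the paper's framing that case is already silently excluded by the opening assumption ``$\spaceS$ is not a fan''.
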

\begin{proof}

We assume that $\spaceS$ is not a fan; thus, by the previous lemma, $\spaceS$ is regular and every line of $\spaceS$ is incident with the same number, $k$, of points. A simple counting argument implies that $b = \frac{v(v - 1)}{k(k-1)}$.

Now define $n$ to be the integer $k-1$; then $b=v = n^2+n+1$. This is enough to conclude that $\spaceS$ is a projective plane \cite[p.138]{D}.
\end{proof}

\subsection{Intrinsic growth results}

We start with a finite set of points $\Pbold$ in a projective plane $(\spaceP, \spaceL, \spaceI)$. We write $\Pbold_i$ for $\Pbold_i(\Pbold)$ and $\Lbold_i$ for $\Lbold_i(\Pbold)$.

\begin{prop}\label{p: growth}
Let $\mathbf{P}$ be a finite set of points in a projective plane. Then one of the following statements hold.
\begin{enumerate}
 \item $|\Pthree|\geq \frac14 |\Pbold|^2$.
\item more than $\frac12 |\Pbold|$ points of $\Pbold$ lie on a line.
\item $\Ptwo=\Pone$ or $\Ptwo = \Pone\cup\{\wp\}$ for some point $\wp$ in the plane.
\end{enumerate}
\end{prop}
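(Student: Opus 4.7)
The plan is to prove the contrapositive: assuming cases (2) and (3) both fail, I derive case (1). Write $n = |\Pbold|$. Failure of (2) says every line carries at most $n/2$ points of $\Pbold$; in particular $\Pbold$ is not collinear, and as noted just before Corollary~\ref{c: frodo} this forces $\Pbold \subseteq \Pone \subseteq \Ptwo$. Failure of (3) says $|\Ptwo \setminus \Pone| \geq 2$, so I fix two distinct points $\wp_1, \wp_2 \in \Ptwo \setminus \Pone$.

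The heart of the argument is a lower bound on $m_i$, the number of lines of $\Ltwo$ incident to $\wp_i$. Since $\wp_i \notin \Pone$, at most one line of $\Lnought$ passes through $\wp_i$, i.e., at most one line $\ell^*$ through $\wp_i$ contains two or more points of $\Pbold$. Let $m^*$ denote the number of $\Pbold$-points on $\ell^*$ (with $m^* = 0$ if no such $\ell^*$ exists); failure of (2) gives $m^* \leq n/2$. The remaining $n - m^*$ points of $\Pbold$ must lie on pairwise distinct lines through $\wp_i$ (else a second $\Lnought$-line would pass through $\wp_i$), and each such line carries $\wp_i \in \Ptwo$ together with a point of $\Pbold \subseteq \Ptwo$, hence lies in $\Ltwo$. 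Counting $\ell^*$ in as well, this yields $m_i \geq n - m^* + 1 \geq n/2 + 1$.

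To finish, I apply the dual of Lemma~\ref{lem:parnas} to the pair $\{\wp_1,\wp_2\}$ with respect to the line set $\Ltwo$: the pairs $(\ell,\ell')$ with $\ell \in \Ltwo$ through $\wp_1$, $\ell' \in \Ltwo$ through $\wp_2$, and both distinct from the line joining $\wp_1$ and $\wp_2$, meet at $(m_1-1)(m_2-1)$ pairwise distinct points, each lying on at least two lines of $\Ltwo$ and so belonging to $\Pthree$. Hence
\[
|\Pthree| \;\geq\; (m_1-1)(m_2-1) \;\geq\; (n/2)^2 \;=\; \tfrac{1}{4}|\Pbold|^2,
\]
which is case (1).

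The main obstacle I anticipate is the lower bound on $m_i$: a priori a line through $\wp_i$ could be crowded with points of $\Ptwo \setminus \Pbold$, and failure of (2) gives no direct control over such points. The way around this is to count only lines of $\Ltwo$ through $\wp_i$ that already carry a point of $\Pbold$ itself, and to leverage $\wp_i \notin \Pone$ to force those $\Pbold$-points onto pairwise distinct lines through $\wp_i$ (save at most one). Once $m_i$ is pinned down, everything else is a clean double application of duality to Lemma~\ref{lem:parnas}.
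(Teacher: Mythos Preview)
Your proof is correct and follows essentially the same line as the paper's: assume (2) and (3) fail, pick two points of $\Ptwo\setminus\Pone$, use $\wp_i\notin\Pone$ together with the $\tfrac12|\Pbold|$ cap on collinear $\Pbold$-points to get at least $\tfrac12|\Pbold|+1$ lines of $\Ltwo$ through each $\wp_i$, and intersect these two bushes to obtain $\ge(\tfrac12|\Pbold|)^2$ points of $\Pthree$. Your write-up is in fact more explicit than the paper's in justifying that these lines lie in $\Ltwo$ (via $\Pbold\subseteq\Ptwo$) and in invoking the dual of Lemma~\ref{lem:parnas} for the final count.
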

\begin{proof}
Suppose (2) and (3) are false. Thus there are two points in $\Ptwo\backslash \Pone$. Now consider lines through a point $\wp$ which does not lie in $\Pone$. At most one of these lines can conain more than one point of $\Pbold$ (otherwise $p$ would lie in $\Pone$). What's more, by (2), such a line can contain at most $\frac12 |\Pbold|$ points of $\Pbold$. Thus there are at least $\frac12|\Pbold|+1$ lines through $\wp$ which are incident with a point from $\Pbold$.

Now we have two such points, $\wp_a$ and $\wp_b$ in $\Ptwo\backslash \Pone$. Thus there are at least $\frac12|\Pbold|+1$ lines in $\Ltwo$ through $\wp_a$ (resp. through $\wp_b$). These lines must intersect in at least $\left(\frac{|\Pbold|}2\right)^2$ points in $\Pthree$.
\end{proof}

\begin{lem}\label{l: plus}
If $|\Pone|=|\Pbold|+1$ then $\Pbold$ consists of all the points of a projective plane minus one.
\end{lem}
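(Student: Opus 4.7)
The plan is to pin down the structure by combining Fisher's inequality (Corollary~\ref{c: frodo}) with Lemma~\ref{l: poop}, applying the latter to both $(\mathbf{P},\mathbf{L})$ and the dual linear space $(\mathbf{L},\mathbf{P}_1)$. The hypothesis $|\mathbf{P}_1|=|\mathbf{P}|+1$ already forces $\mathbf{P}$ to be non-collinear, for otherwise $\mathbf{L}$ has only one element and $\mathbf{P}_1=\emptyset$. Corollary~\ref{c: frodo} then gives
\[
|\mathbf{P}|\leq |\mathbf{L}|\leq |\mathbf{P}_1|=|\mathbf{P}|+1,
\]
so $|\mathbf{L}|$ equals either $|\mathbf{P}|$ or $|\mathbf{P}_1|$.

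I would first dispose of the case $|\mathbf{L}|=|\mathbf{P}|$. Lemma~\ref{l: poop} then forces $(\mathbf{P},\mathbf{L})$ to be a projective plane or a fan, and in either structure every pairwise intersection of lines of $\mathbf{L}$ already lies in $\mathbf{P}$, giving $\mathbf{P}_1=\mathbf{P}$ and contradicting the hypothesis. So $|\mathbf{L}|=|\mathbf{P}_1|$, and Lemma~\ref{l: poop} applied to the dual linear space $(\mathbf{L},\mathbf{P}_1)$ says it is either a projective plane or a fan. If it is a projective plane then $(\mathbf{P}_1,\mathbf{L})$ is itself a projective subplane of the ambient plane, and since $\mathbf{P}\subseteq\mathbf{P}_1$ with $|\mathbf{P}|=|\mathbf{P}_1|-1$, the desired conclusion follows.

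The main obstacle is to rule out the remaining subcase, in which the dual $(\mathbf{L},\mathbf{P}_1)$ is a fan. Dualising Lemma~\ref{l: degenerate} produces a point $p_0\in\mathbf{P}_1$ through which every line of $\mathbf{L}$ passes except one distinguished line $m$, while every other point of $\mathbf{P}_1$ lies on exactly two lines of $\mathbf{L}$ (namely $m$ and a line through $p_0$); in particular $\mathbf{P}_1\setminus\{p_0\}\subseteq m$. Let $q$ be the unique element of $\mathbf{P}_1\setminus\mathbf{P}$. If $q=p_0$ then all of $\mathbf{P}$ lies on $m$, contradicting non-collinearity. Otherwise the fan structure demands $p_0 q\in\mathbf{L}$; but the line $p_0 q$ meets $m$ only at $q$, so the only points of $\mathbf{P}_1$ on $p_0 q$ are $p_0$ and $q$. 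Since $\mathbf{P}\subseteq\mathbf{P}_1$ and $q\notin\mathbf{P}$, the line $p_0 q$ contains only $p_0$ from $\mathbf{P}$ and hence cannot be an element of $\mathbf{L}$, the desired contradiction.
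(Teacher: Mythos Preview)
Your proof is correct and follows essentially the same route as the paper: squeeze $|\mathbf{L}|$ between $|\mathbf{P}|$ and $|\mathbf{P}_1|$ via Fisher, apply Lemma~\ref{l: poop} to whichever pair has $b=v$, and then eliminate the fan possibilities. The only difference is in the fan case for $(\mathbf{L},\mathbf{P}_1)$: the paper observes more briefly that the dual of a fan is a fan, so $(\mathbf{P}_1,\mathbf{L})$ is a fan and $\mathbf{P}$ is a fan minus a point (hence itself a fan or a line, giving $\mathbf{P}_1=\mathbf{P}$), whereas you unpack the fan structure explicitly and locate the contradiction on the line $\overline{p_0q}$. Both arguments are sound and amount to the same thing.
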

\begin{proof}
Observe that both $(\Pbold, \mathbf{L})$ and $(\mathbf{L}, \Pone)$ are linear spaces. Fisher's inequality and the fact that $|\Pone|=|\Pbold|+1$ implies that, for one of these two linear spaces, $b=v$. Hence one of these two linear spaces is a projective plane or a fan.

If $(\Pbold, \mathbf{L})$ is a projective plane or a fan then $\Pone = \Pbold$ which is a contradiction. Suppose $(\mathbf{L}, \Pone)$ is a fan. The dual of a fan is a fan, so $(\Pone, \mathbf{L})$ is also a fan. But then $\Pbold$ must be the fan minus one point; this is either a fan or a line. In both cases $\Pbold=\Pone$ which is a contradiction. The result follows.
\end{proof}

By Lemma~\ref{l: degenerate}, the following corollary to Proposition~\ref{p: growth} is equivalent to Theorem \ref{t: main}.

\begin{cor}\label{c: growth}
Let $\Pbold$ be a finite set of points in a projective plane. Then one of the following statements hold.
\begin{enumerate}
 \item $|\Pthree(\Pbold)|\geq \frac14 |\Pbold|^2$.
\item $\Pone$ is equal to the set of points of a projective subplane, or to the set of points of a projective subplane minus one.
\item $\Pbold$ is equal to the set of points of a fan.
\item there exists a line $\ell$ that is incident with all points of $\Pbold$ .
\end{enumerate}
\end{cor}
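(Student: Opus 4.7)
I would derive Corollary~\ref{c: growth} from Proposition~\ref{p: growth} by a case analysis on its three alternatives; case (1) of the proposition is exactly case (1) of the corollary, so the real work is in cases (2) and (3).

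If case (3) of the proposition holds --- so $\Ptwo = \Pone$ or $\Ptwo = \Pone \cup \{\wp\}$ --- I would imitate the proof of Lemma~\ref{l: plus}. Both $(\Pone, \Lone)$ and $(\Lone, \Ptwo)$ are linear spaces (the first trivially, the second because any two lines of $\Lone$ meet at a point of $\Ptwo$ by definition), so Fisher's inequality gives $|\Pone| \leq |\Lone| \leq |\Ptwo| \leq |\Pone| + 1$, and one of these inequalities must be an equality. By Lemma~\ref{l: poop} the corresponding linear space is a projective plane or a fan. A projective-plane conclusion places us in case (2) of the corollary: $\Pone$ is itself a subplane in the subcase $\Ptwo = \Pone$, while $\Pone = \Ptwo \setminus \{\wp\}$ is a subplane minus one point in the subcase $\Ptwo = \Pone \cup \{\wp\}$. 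A fan conclusion descends to $\Pbold$ via the inclusion $\Pbold \subseteq \Pone$ (valid whenever $\Pbold$ is not contained in a line, the contrary giving case (4) directly): every point of $\Pbold$ must be either the center or on the axis of the fan, and a short inspection yields case (3) or case (4).

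If case (2) of the proposition holds --- more than $\frac12 |\Pbold|$ points of $\Pbold$ lie on a line $\ell$ --- I would split by the size of $\Pbold \setminus \ell$. If $\Pbold \subseteq \ell$, case (4) holds. If $|\Pbold \setminus \ell| = 1$, then $\Pbold$ has exactly the structure of a fan with axis $\ell$ and center the stray point, so case (3) holds. If $|\Pbold \setminus \ell| \geq 2$, then either case (3) of the proposition also holds (reducing to the paragraph above), or else $\Ptwo \setminus \Pone$ contains two distinct points; in this latter situation, counting intersections of lines of $\Lbold$ through two points of $\Pbold \setminus \ell$ (each of which lies on at least $|\Pbold \cap \ell|$ lines joining it to points of $\Pbold \cap \ell$) produces enough points in $\Pone$, and hence in $\Pthree$ via $\Pone \subseteq \Pthree$ from Corollary~\ref{c: frodo}, to satisfy case (1).

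The main obstacle is the descent of fan structure from the intermediate set $\Pone$ or $\Ptwo$ down to $\Pbold$ itself: a fan on a larger set need not restrict to a fan on a subset, so one must exploit the rigidity of a fan (unique center and axis) together with $\Pbold \subseteq \Pone$ and Lemma~\ref{l: degenerate}, and treat separately the small cases ($|\Pbold| \leq 2$ or $|\Pbold \cap \ell| = 1$) which degenerate to case (4).
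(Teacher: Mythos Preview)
Your proposal is correct and follows essentially the same route as the paper: apply Proposition~\ref{p: growth}, handle its case~(3) via Lemma~\ref{l: poop} (the paper packages the $\Ptwo = \Pone \cup \{\wp\}$ subcase as Lemma~\ref{l: plus}, which you effectively reprove inline), and in its case~(2) with $|\Pbold \setminus \ell| \geq 2$ count intersections of two pencils of $\Lbold$ through points of $\Pbold\setminus\ell$ to land in $\Pone \subseteq \Pthree$. Your dichotomy in that last step --- splitting on whether case~(3) of the proposition also holds --- is superfluous, since the two-points-off-$\ell$ count works unconditionally and never uses the premise $|\Ptwo \setminus \Pone| \geq 2$; the paper simply performs the count directly.
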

\begin{proof}
Apply Proposition \ref{p: growth} to the set $\Pbold$. If (1) holds there, then we are done. If (3) holds there, then there are two cases: first, assume that $\Ptwo=\Pone$; then $(\Pone, \Lone)$ is a finite linear space with $b=v$. Now Lemma~\ref{l: poop} implies that $(\Pone, \Lone)$ is a projective plane (in which case we are done), or else $(\Pone, \Lone)$ is a fan. If $(\Pone, \Lone)$ is a fan, then $\Pbold$ is equal to the set of points of a fan and we are done. The second possibility is that $\Ptwo=\Pone\cup\{\wp\}$ for some point $\wp\not\in\Pone$. Then Lemma~\ref{l: plus} implies that $\Pone$ is equal to the set of points of a projective plane minus one.

We are left with the possibility that (2) holds in Proposition \ref{p: growth}, i.e. there exists $\ell\in\spaceL$ incident with at least $\frac12|\Pbold|$ points of $\Pbold$. If $\ell$ is incident with all points of $\Pbold$, then we are done; if $\ell$ is incident with all points of $\Pbold$ but one, then $\Pbold$ is equal to the set of points of a fan, and we are done. Thus we may assume that $\Pbold$ contains at least two points $\wp_a$ and $\wp_b$ that are not incident with $\ell$. Let $\mathbf{L}_a$ (resp.  $\mathbf{L}_b$) be the set of lines in $\mathbf{L}$ that are incident with $\wp_a$ (resp. $\wp_b$). Observe that, since $\ell$ is incident with at least $\frac12|\Pbold|$ points of $\Pbold$, $|\mathbf{L}_a|\geq \frac12 |\Pbold|$; similarly $|\mathbf{L}_b|\geq \frac12 |\Pbold|$.

Now the lines in $\mathbf{L}_a$ and in $\mathbf{L}_b$ must intersect in at least $\left(\frac{|\Pbold|}{2}\right)^2$ points of $\Pone$. Since $\Pone\subseteq \Pthree$ the result follows.
\end{proof}
\section{Appendix. A geometric Ruzsa-type inequality}

Although the result discussed in this Appendix is not used to prove the main theorem of this note, we have included it to give an example of  an intrinsically geometric statement that can be proved entirely within the sphere of geometry.
It corresponds to the following result, key within arithmetic combinatorics.

\begin{lem}[Ruzsa \cite{R}]\label{lem:gotor}
Let $A,B$ and $C$ be non-empty subsets of an abelian group. Then
$$
|A-C|\leq \frac{|A-B||B-C|}{|B|}
$$
\end{lem}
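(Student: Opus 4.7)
The plan is to prove Ruzsa's inequality by constructing an injection
\[
\phi\colon B\times(A-C)\longrightarrow (A-B)\times(B-C),
\]
so that $|B|\cdot|A-C|\le|A-B|\cdot|B-C|$, which rearranges to the stated bound (note the hypotheses ensure $B$ is non-empty, so we may divide by $|B|$).

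First, for each $d\in A-C$ I would fix once and for all a representation $d=\alpha(d)-\gamma(d)$ with $\alpha(d)\in A$ and $\gamma(d)\in C$; such a representation exists by the definition of the difference set. Using these fixed choices, I define
\[
\phi(b,d):=\bigl(\alpha(d)-b,\;b-\gamma(d)\bigr).
\]
The first coordinate lies in $A-B$ and the second in $B-C$, so $\phi$ does land in the target set.

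The key step is verifying injectivity. Given any image pair $(x,y)\in\phi(B\times(A-C))$, observe that
\[
x+y = \bigl(\alpha(d)-b\bigr)+\bigl(b-\gamma(d)\bigr)= \alpha(d)-\gamma(d)=d,
\]
so the second coordinate $d$ of the preimage is recovered as $x+y$. Once $d$ is known, the fixed choices $\alpha(d),\gamma(d)$ are determined, and then $b=\alpha(d)-x$ is also determined. Hence $\phi$ is injective, which yields the desired counting inequality.

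There is essentially no obstacle beyond noticing the trick of fixing, for each $d\in A-C$, a single witnessing pair $(\alpha(d),\gamma(d))$; this is what lets the map $\phi$ be inverted from knowledge of its image alone. Everything else is bookkeeping, and no group structure beyond associativity and inverses is used — commutativity is not actually needed for this form of the argument, though it is standard to state the lemma in the abelian setting.
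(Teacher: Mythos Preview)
Your argument is correct and matches the paper's proof essentially verbatim: the paper also fixes a representative pair $(f_A(x),f_C(x))$ for each $x\in A-C$, defines the same injection $(x,b)\mapsto(f_A(x)-b,\,b-f_C(x))$, and recovers first $x$ as the sum of the two coordinates and then $b$. The only differences are cosmetic (notation and the order of the factors in the domain).
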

This is often called ``Ruzsa's triangle inequality''.
Note that we define
$A-B := \{a-b \mid a\in A, b\in B\}$,
and similarly for the other difference sets.
\begin{proof}
It will be enough to construct an injective map
\[\iota: (A-C) \times B \to (A-B) \times (B-C).\]
For each $x\in A-C$, let $f_A(x)$, $f_C(x)$ be elements of
$A$ and $C$ such that $x = f_A(x)- f_C(x)$. Let
\[\iota(x,b) = (f_A(x)-b,b-f_C(x))\]
for $x\in A-C$, $b\in B$.

To show that $\iota$ is injective, it is enough to show how to deduce
what $x$ and $b$ are, given $\iota(x,b)$. Now
\[x = f_A(x) - f_C(x) = (f_A(x)-b) + (b-f_C(x)),\]
so we can certainly deduce $x$ from $\iota(x,b)$. In turn $x$, determines
$f_A(x)$, and this, together with $f_A(x)-b$, gives us $b = f_A(x)-(f_A(x)-b)$.
\end{proof}
As we can see, the proof rests on the fact that $a-b$ and $b-c$ determine
$a-c$.

To prove the geometric version of this result we need a definition. Let $\alpha$ be a point, $\ell$ a line, in a projective plane $\mathscr{P}$. We say that $\mathscr{P}$ is {\it $(\alpha, \ell)$-Desarguesian} if, whenever two triangles are in perspective from $\alpha$ such that two pairs of their tangents meet on $\ell$, then the third pair of tangents meet on $\ell$. Rather than give a rigorous definition of all of the terms just used (which are all standard), we refer the reader to Figure~\ref{f: desargues}.

If $\mathscr{P}$ is $(\alpha, \ell)$-Desarguesian for all incident pairs $(\alpha, \ell)$ , then we say that $\mathscr{P}$ satisfies the {\it Little Desargues Configuration}. If $\mathscr{P}$ is $(\alpha, \ell)$-Desarguesian for all pairs $(\alpha, \ell)$, incident or otherwise, then we say that $\mathscr{P}$ is {\it Desarguesian}.

\tikzstyle{point}=[circle, draw, fill=black!50,
                        inner sep=0pt, minimum width=2pt]
\begin{center}
\begin{figure}
\begin{tikzpicture}
\node [point] at (0,10) (alpha) {} ;
\node at (0,9.8) {$\alpha$};
\draw (alpha) -- (10,10);
\draw (alpha) -- (10, 8.5);
\draw (alpha) -- (10, 7);

\draw (0,6) -- (10,6) ;
\draw (5,10) -- (2,6);
\draw (8,10) -- (2,6);
\draw (5,10) -- (5,6);
\draw (8,10) -- (5,6);
\draw (4.49,9.34) -- (6.52,6);
\draw (6.535,9.02) -- (6.52,6);
\path [fill=gray] (5,10) -- (4.50,9.32) -- (5,8.52) -- (5,10);
\path [fill=gray] (8,10) -- (6.54, 9.02) -- (6.535,8.05) -- (8,10);
\node at (8,5.8) {$\ell$};
\end{tikzpicture}
\begin{caption}{The $(\alpha, \ell)$-Desargues configuration}\label{f: desargues}
\end{caption}
\end{figure}
\end{center}

Consider now the following geometric set-up. Let $l_{A},l_{B},l_{C}$ be the three lines in Figure~\ref{f: desargues} which are incident to the point $\alpha$, with $l$ the bottom line as given.
Let $A$, $B$, $C$ be point sets supported, respectively, on the lines $l_{A},l_{B},l_{C}$; assume $A$, $B$, $C$ are disjoint from $\alpha$ and $\ell$.
Given two distinct points $x$, $y$, write $\overline{xy}$ for the line connecting $x$ to $y$.
For $x$, $y$ distinct and not both on $\ell$, let $\lbrack x,y\rbrack$ be
the intersection of $\ell$ and $\overline{xy}$. Define
\[[X,Y] = \{\lbrack x,y\rbrack : x\in X, y\in Y\}\]
for any two disjoint sets of points $X$, $Y$ such that either $X$ or $Y$ is
also disjoint from $\ell$.

With this notation we have
\begin{grd}
If $\mathscr{P}$ is $(\alpha, \ell)$-Desarguesian, then
\[
|[A,C]|\leq \frac{|[A,B]||[B,C]|}{|B|}.\]
\end{grd}
\begin{proof}
It will be enough to construct an injective map
\[\iota: \lbrack A,C\rbrack \times B \to \lbrack A,B\rbrack \times
\lbrack B,C\rbrack.\]
For each $p\in \lbrack A,C\rbrack$, let $f_A(p)$, $f_C(p)$ be elements of
$A$ and $C$ such that $p$ lies on the line through $f_A(p)$ and $f_C(p)$.
(Such elements exist by the defnition of $\lbrack A,C\rbrack$.) Let
\[\iota(p,b) = (\lbrack f_A(p),b\rbrack,\lbrack b,f_C(p)\rbrack)\]
for $p\in \lbrack A,C\rbrack$, $b\in B$.

To show that $\iota$ is injective, it is enough to show how to deduce
what $p$ and $b$ are, given $\iota(p,b)$. Since $\mathscr{P}$ is
$(\alpha,\ell)$-Desarguesian, $\lbrack a,b\rbrack$ and $\lbrack b,c\rbrack$
determined $\lbrack a,c\rbrack$ for any $a\in A$, $b\in B$, $c\in C$.
In particular, $\lbrack f_A(p),b\rbrack$ and $\lbrack b,f_C(p)\rbrack$
determine $\lbrack f_A(p),f_B(p)\rbrack = p$. In turn, $p$ determines
$f_A(p)$, and this, together with $\lbrack f_A(p), b\rbrack$, determines $b$.
\end{proof}

Of course, we could have obtained some sort of geometric statement from
Lemma \ref{lem:gotor} by coordinatizing the plane $\mathscr{P}$
(over an alternative division ring; see \cite{HP}). The point is that one
can obtain a natural and simple geometric statement with a natural geometric
proof by transfering the {\em ideas} behind the proof of an arithmetic
statement.

\end{document}